\documentclass[a4paper,12pt]{amsart}
\usepackage{amsmath,amssymb,amsfonts,amsthm}
\usepackage{latexsym,mathrsfs}
\usepackage{graphicx}
\usepackage[all]{xy}
\input xypic
\usepackage{color}
\usepackage[pagebackref]{hyperref}
\hypersetup{colorlinks=true,linkcolor=red,citecolor=blue}
\usepackage[OT2,T1]{fontenc}
\usepackage[utf8]{inputenc}
\usepackage{lmodern}
\usepackage{microtype}
\usepackage{enumerate}
\usepackage{tikz-cd}
\usepackage{bm}
\usepackage{ulem}
\usepackage{mathtools}
\usepackage{tensor}
\theoremstyle{plain}
\newtheorem{theorem}[subsection]{{\bf Theorem}}

\newtheorem{corollary}[subsection]{{\bf Corollary}}
\newtheorem{proposition}[subsection]{{\bf Proposition}}
\newtheorem{lemma}[subsection]{{\bf Lemma}}
\theoremstyle{definition}

\theoremstyle{remark}

\numberwithin{equation}{subsection}

\DeclareMathOperator{\Hom}{Hom}

\DeclareMathOperator{\id}{id}

\DeclareBoldMathCommand{\bbot}{\bot}

\DeclareSymbolFont{cyrletters}{OT2}{wncyr}{m}{n}
\DeclareMathSymbol{\Sha}{\mathalpha}{cyrletters}{"58}

\begin{document}
\title[Uniform exponent bounds]{Uniform exponent bounds for integral group homology}
\author{Primo\v z Moravec}
\address{{
Faculty of  Mathematics and Physics, University of Ljubljana,
and Institute of Mathematics, Physics and Mechanics,
Slovenia}}
\email{primoz.moravec@fmf.uni-lj.si}
\subjclass[2020]{20J05, 20J06, 20F50}
\keywords{group homology, exponent, restricted Burnside problem}
\thanks{ORCID: \url{https://orcid.org/0000-0001-8594-0699}.}
\thanks{The author acknowledges the financial support from the Slovenian Research and Innovation Agency (ARIS), research core funding No. P1-0222, and project No. J1-50001.}
\date{\today}
\begin{abstract}
We prove that, in each fixed degree, the exponent of the integral
homology of a finite group is bounded solely in terms of the degree and
the exponent of the group.  The proof combines the solution of the
restricted Burnside problem with a representability property of the bar
construction and may be viewed as a torsion analogue of the method of
acyclic models.  We also use the Lyndon--Hochschild--Serre spectral
sequence to obtain explicit bounds for finite solvable and nilpotent
groups in terms of their derived length and nilpotency class.
\end{abstract}
\maketitle

\section{Introduction}
\label{s:intro}

\noindent
Let $G$ be a finite group and $H_n(G,\mathbb{Z})$ its $n$-th integral homology group. It is well known \cite[Chapter III, Corollary 10.2]{Bro82} that $|G|\cdot H_n(G,\mathbb{Z})=0$. It is proved in \cite{Moravec2007} using Zel'manov's solution of the restricted Burnside problem \cite{Zel91a,Zel91b} that given $e\ge 1$, there exists a constant $c=c(e)$ such that $c\cdot H_2(G,\mathbb{Z})=0$ for all (locally) finite groups $G$ of exponent $e$. Subsequently, considerable work has been devoted to obtaining good upper bounds for $\exp H_2(G,\mathbb{Z})$. 
A recent survey and further references are given in
 \cite{Thomas2021}.

The purpose of this paper is to generalize the above-mentioned result of \cite{Moravec2007} to higher homology groups:

\begin{theorem}
\label{thm:uniform-homology-exponent}
For every pair of positive integers $n$ and $e$ there exists an integer
$N_n(e)$, depending only on $n$ and $e$, such that
$$N_n(e)H_i(G,\mathbb Z)=0$$
for every finite group $G$ of exponent dividing $e$ and every
$1\leq i\leq n$.
In particular,
$\exp H_n(G,\mathbb Z)$
is bounded in terms of $n$ and $\exp(G)$ only.

One may take
$$ N_n(e)=\prod_{q=1}^n |R(q,e)|,$$
where $R(q,e)$ stands for the $q$-generator restricted Burnside group of exponent $e$.
\end{theorem}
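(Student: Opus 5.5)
The plan is a torsion version of acyclic models, carried out on the bar complex with trivial coefficients. Write $C_*(G)=\mathbb Z\otimes_{\mathbb ZG}B_*(G)$ for the (normalized) bar complex computing $H_*(G,\mathbb Z)$, so $C_i(G)$ is free abelian on the symbols $[g_1|\cdots|g_i]$. The key observation is that this is a \emph{representable} functor. An $i$-tuple $(g_1,\dots,g_i)$ in a finite group $G$ of exponent dividing $e$ is the same thing as a homomorphism $\varphi\colon F_i\to G$ from the free group of rank $i$. Its image is a finite $i$-generated group of exponent dividing $e$, so by the solution of the restricted Burnside problem $\varphi$ factors uniquely through $R(i,e)$, sending the canonical generators $x_1,\dots,x_i$ to $g_1,\dots,g_i$. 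Hence every basis element of $C_i(G)$ has the form $\varphi_*(u_i)$ for a unique $\varphi\colon R(i,e)\to G$, where $u_i=[x_1|\cdots|x_i]\in C_i(R(i,e))$ is the \emph{model}.

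Using the models, I will construct, by induction on $i\ge 1$, maps $s_i\colon C_i(G)\to C_{i+1}(G)$ that are natural in $G$, ranging over finite groups of exponent dividing $e$, and satisfy
$$d s_i+s_{i-1}d=N_i(e)\cdot \mathrm{id}\quad\text{on } C_i(G),\qquad N_i(e)=\prod_{q=1}^i|R(q,e)|.$$

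Base case $i=1$. In the normalized complex with trivial coefficients, $d[g]=[\,]-[\,]=0$, so $u_1$ is a cycle in $C_1(R(1,e))$. Since $|R(1,e)|\cdot H_1(R(1,e),\mathbb Z)=0$, there is $w_1$ with $dw_1=|R(1,e)|u_1$. Put $s_0=0$ and $s_1(\varphi_*u_1)=\varphi_*w_1$.

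Inductive step. Given $s_{i-1}$, set $c=N_{i-1}(e)\,u_i-s_{i-1}(du_i)\in C_i(R(i,e))$. By the inductive identity in degree $i-1$,
$$dc=N_{i-1}du_i-\bigl(N_{i-1}du_i-s_{i-2}d\,du_i\bigr)=0,$$
so $c$ is a cycle. Because $|R(i,e)|\cdot H_i(R(i,e),\mathbb Z)=0$ by \cite[III.10.2]{Bro82}, there is $w_i$ with $dw_i=|R(i,e)|\,c$. Define $s_i(\varphi_*u_i)=\varphi_*\bigl(|R(i,e)|\,s_{i-1}(u_i)\ \text{?}\bigr)$—more cleanly, rescale the previous homotopy by $|R(i,e)|$. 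Concretely, replace all of $s_0,\dots,s_{i-1}$ by their multiples by $|R(i,e)|$, which turns the identity into one with constant $N_i$ in lower degrees, and set $s_i(\varphi_*u_i)=\varphi_*w_i$. This extends linearly, and it is well defined because the basis element determines $\varphi$ uniquely. Naturality of $s_{i-1}$ and of $d$ then gives, for every basis element $\varphi_*u_i$,
$$(ds_i+s_{i-1}d)(\varphi_*u_i)=\varphi_*\bigl(dw_i+|R(i,e)|\,s_{i-1}du_i\bigr)=|R(i,e)|N_{i-1}\,\varphi_*u_i.$$

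Conclusion. Having built the maps up to degree $n$, apply the identity to a cycle $z\in C_i(G)$ with $1\le i\le n$: it gives $N_n(e)z=d(s_iz)$, so $N_n(e)$ kills $H_i(G,\mathbb Z)$, as the theorem asserts.

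The main obstacle is the bookkeeping in the inductive step, not the mathematics. The constants must accumulate multiplicatively, so each earlier homotopy gets rescaled when a new degree is added. One must also check that naturality is used only for homomorphisms out of $R(i,e)$, and that these exist and are unique precisely because $G$ is finite of exponent dividing $e$ (Zel'manov). I would also double-check that the normalized complex is compatible with the face maps when some $g_j=1$: degenerate tuples are zero, and $\varphi_*$ maps degenerate chains to degenerate chains, so the formulas remain consistent.
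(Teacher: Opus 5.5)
\textbf{Verdict: genuine gap.} Your strategy is the paper's: represent $C_q$ by $R(q,e)$, build a natural null-homotopy of multiplication by $N_q$ degree by degree, rescale the earlier homotopies by $|R(q,e)|$, and use $|R_q|\,H_q(R_q,\mathbb Z)=0$ to get the new model chain. The gap is your choice of the \emph{normalized} bar complex, which breaks the representability the argument rests on.

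There, $C_i(G)$ is not $\mathbb Z[\Hom(R_i,G)]$. It is the quotient by those $\varphi$ with some $\varphi(x_j)=1$, for which $\varphi_*u_i=0$. The rule $\varphi_*u_i\mapsto\varphi_*w_i$ on nondegenerate tuples is well defined on each $C_i(G)$. But it is natural only if $\psi_*w_i=0$ for every $\psi$ killing some $x_j$, and an arbitrary $w_i$ with $dw_i=|R(i,e)|\,c$ need not satisfy this.

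Your inductive identity $(ds_i+s_{i-1}d)(\varphi_*u_i)=\varphi_*\bigl(dw_i+|R(i,e)|\,s_{i-1}du_i\bigr)$ uses $s_{i-1}\varphi_*=\varphi_*s_{i-1}$ on $du_i$. However, $\varphi$ can collapse a nondegenerate face of $u_i$, e.g.\ $[x_1x_2|x_3]\mapsto[1|g_3]=0$ when $g_1g_2=1$.

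Here is a concrete failure. Take $e=2$, so $R_2=\langle x_1,x_2\rangle\cong C_2\times C_2$. Since $[x_2|x_2|x_2]$ is a normalized $3$-cycle, both $w_2$ and $w_2+[x_2|x_2|x_2]$ are admissible. Switching between them changes $s_2$ by $\sigma[g_1|g_2]=[g_2|g_2|g_2]$. Let $G=\langle a,b\rangle\cong C_2\times C_2$ and let $\varphi\colon R_3\to G$ be given by $(x_1,x_2,x_3)\mapsto(a,a,b)$. Then $\sigma(\varphi_*du_3)-\varphi_*\sigma(du_3)=[b|b|b]$. So the defects of $ds_3+s_2d-N_3\id$ on $[a|a|b]$ for the two choices differ by $|R(3,2)|\,[b|b|b]=8[b|b|b]\neq0$. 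At least one admissible choice therefore makes your degree-$3$ identity false.

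Your closing check, that $\varphi_*$ sends degenerate chains to degenerate chains, only shows $\varphi_*$ is defined on normalized chains. It does not show that $s_i$ commutes with $\varphi_*$.

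\textbf{Repair.} Do what the paper does and use the unnormalized complex $C_q(G)=\mathbb Z[G^q]$, in which degenerate tuples are basis elements. Then $C_q(-)\cong\mathbb Z[\Hom(R_q,-)]$ exactly (Lemma~\ref{lem:burnside-represents-tuples}). Every map $[g_1|\cdots|g_q]\mapsto(\phi_{\mathbf g})_\#(w)$ is then automatically natural, and the rest of your argument goes through verbatim. This includes your base case, since $\partial_1=0$ there too; the paper simply writes $c_1=\sum_j[x_1^j|x_1]$ explicitly instead of invoking $|R_1|\,H_1(R_1,\mathbb Z)=0$. If you want to stay with normalized chains, you must additionally prove that $w_i$ can be chosen with $(\pi_j)_*w_i=0$ for the quotient maps $\pi_j$ killing $x_j$. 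That is a genuinely extra argument you have not supplied.
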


The argument uses the solution of the restricted
Burnside problem together with a degreewise representability property
of the bar complex. The proof is reminiscent of the method of acyclic models \cite[Chapter VIII]{Mac63}, but the
models $R(q,e)$ are not acyclic.  Instead, at the $q$-th stage one uses
the fact that $H_q(R(q,e),\mathbb Z)$ has a controlled annihilator.
It may therefore be viewed as a torsion version of the acyclic-model
argument.

The resulting bound stated in Theorem \ref{thm:uniform-homology-exponent} is highly non-effective, since the orders
of the restricted Burnside groups are in general
enormous, see Vaughan-Lee and Zel'manov \cite{VZ99}. Better bounds are obtained when other invariants of the groups in question are allowed to be included in the estimates. We briefly present a method based on the Lyndon--Hochschild--Serre spectral sequence. We provide bounds for $\exp H_n(G,\mathbb{Z})$ in the cases when $G$ is solvable or nilpotent, in terms of the derived length or nilpotency class, respectively. 
The filtration argument used in the Lyndon--Hochschild--Serre bound follows an approach already taken by Hilton \cite{Hil01}, and is also
closely related to Browder's work \cite{Browder1983}.  
\section{Uniform bounds for higher homology}
\label{sec:higher-homology-exponent}

\noindent
In this section we show that, in each fixed degree, the exponent of the
integral homology of a finite group is bounded in terms of the exponent
of the group alone. 

Fix a positive integer $e$, and denote by $\mathcal C_e$ the category of
finite groups of exponent dividing $e$.  

For $q\geq 1$, let
$R(q,e)$
denote the restricted Burnside group of rank $q$ and exponent $e$, that is
$$  R(q,e)=F_q/\bigcap_N N,$$
where $F_q$ is a free group of rank $q$, and the intersection is taken over all normal subgroups
$N\trianglelefteq F_q$ such that $F_q/N$ is finite and has exponent
dividing $e$. 
The group $R(q,e)$ is finite, by the solution of the restricted Burnside
problem \cite{Zel91a,Zel91b}, and its order depends on $e$ and $q$ only. Every finite $q$-generator group of exponent dividing $e$ is an image of $R(q,e)$. 

Throughout the rest of this section, the exponent $e$ is fixed, and we use the shorthand notation $R_q=R(q,e)$.
Fix a generator set $x_1,\ldots,x_q$ of $R_q$, and denote $m_q=|R_q|$. 

We use the standard inhomogeneous bar complex; see \cite{Bro82}.
Given a set $X$, let $\mathbb{Z}[X]$ be the free abelian group on $X$. For a group $G$, denote by $C_\bullet(G)$ its inhomogeneous bar complex:
\begin{align*}
    C_0(G) &= \mathbb{Z},\\
    C_q(G) &= \mathbb{Z}[G^q],
\end{align*}
where $q>0$.
The basis element corresponding to the tuple $(g_1,\ldots,g_q)\in G^q$ in $C_q(G)$ is written in the bar notation as $[g_1\mid\cdots\mid g_q]$.  The differential $\partial_q:C_q(G)\to C_{q-1}(G)$ is given by
\begin{equation}
    \begin{split}
    \label{eq:diff}
  \partial_q[g_1\mid\cdots\mid g_q]
  ={}&
  [g_2\mid\cdots\mid g_q]
  \\
  &+
  \sum_{i=1}^{q-1}
  (-1)^i
  [g_1\mid\cdots\mid g_i g_{i+1}
  \mid\cdots\mid g_q]
  \\
  &+
  (-1)^q
  [g_1\mid\cdots\mid g_{q-1}].
\end{split}
\end{equation}
In particular, $\partial_1=0$.  The homology of this complex is
$H_\bullet(G,\mathbb Z)$. If $f:G\to H$ is a homomorphism, then the induced map $C_\bullet(G)\to C_\bullet(H)$ will be denoted by $f_{\#}$.

\begin{lemma}
\label{lem:burnside-represents-tuples}
For every $G\in\mathcal C_e$ and every $q\geq 1$, the map
\[
  \Hom(R_q,G)
  \longrightarrow G^q,
  \qquad
  \varphi\longmapsto
  \bigl(\varphi(x_1),\ldots,\varphi(x_q)\bigr),
\]
is a bijection.  Moreover, these bijections are natural in $G$.
Consequently,
\[
  C_q(-)
  \cong
  \mathbb Z\bigl[
    \Hom(R_q,-)
  \bigr]
\]
as functors $\mathcal C_e\to\mathbf{Ab}$.
\end{lemma}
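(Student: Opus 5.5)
The plan is to show that $R_q$, together with the chosen generators $x_1,\ldots,x_q$, is a free object on $q$ generators in $\mathcal C_e$. The rest of the lemma then follows formally.

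\emph{Injectivity.} Since $x_1,\ldots,x_q$ generate $R_q$, a homomorphism $\varphi:R_q\to G$ is determined by its values on the $x_i$. Hence the map $\varphi\mapsto(\varphi(x_1),\ldots,\varphi(x_q))$ is injective.

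\emph{Surjectivity.} This is the only step with real content. Given $(g_1,\ldots,g_q)\in G^q$, let $\psi:F_q\to G$ be the homomorphism sending the $i$-th free generator $y_i$ to $g_i$, and put $N=\ker\psi$. Then $F_q/N\cong\psi(F_q)\le G$ is finite and has exponent dividing $e$, so $N$ is one of the normal subgroups in the intersection defining $R_q$. Therefore $\bigcap N\subseteq\ker\psi$, and $\psi$ factors through the quotient $R_q=F_q/\bigcap N$. This gives $\varphi:R_q\to G$ with $\varphi(x_i)=g_i$, where we take $x_i$ to be the image of $y_i$; this is the natural choice of generators. If the $x_i$ were some other generating set, I would also need an automorphism of $R_q$ carrying the images of the $y_i$ to the $x_i$. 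That is the only subtle point, and I would settle it by fixing the $x_i$ to be the images of the free generators.

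\emph{Naturality.} For a homomorphism $f:G\to H$ in $\mathcal C_e$, postcomposition $\varphi\mapsto f\circ\varphi$ corresponds to the map $(g_i)\mapsto(f(g_i))$ on tuples. This holds because $(f\circ\varphi)(x_i)=f(\varphi(x_i))$, so the square commutes.

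\emph{Conclusion.} Apply the free abelian group functor $\mathbb Z[-]$ to the natural bijection $\Hom(R_q,-)\cong(-)^q$. This gives a natural isomorphism $\mathbb Z[\Hom(R_q,-)]\cong\mathbb Z[(-)^q]=C_q(-)$. It is compatible with the induced maps $f_\#$, since $f_\#[g_1\mid\cdots\mid g_q]=[f(g_1)\mid\cdots\mid f(g_q)]$ is exactly $\mathbb Z$ applied to $f^q$.
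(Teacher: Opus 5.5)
Your proposal is correct and follows the paper's proof step for step: injectivity because the $x_i$ generate, existence of $\phi_{\mathbf g}$, the commuting naturality square, then applying $\mathbb Z[-]$. Where the paper simply asserts that $\phi_{\mathbf g}$ exists, you justify surjectivity through the defining intersection $\bigcap N$. Your concern about other generating sets is also resolved: for any $q$-element generating set $x_1,\ldots,x_q$ of $R_q$, the same factoring argument gives an endomorphism of $R_q$ sending the images of the free generators to the $x_i$. That endomorphism is surjective, hence an automorphism since $R_q$ is finite, so the lemma holds for the paper's arbitrary choice of generators as well.
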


\begin{proof}
A homomorphism $R_q\to G$ is determined by the images of the
generators $x_1,\ldots,x_q$, so the displayed map is
injective.
Conversely, a tuple $\mathbf{g}=(g_1,\ldots,g_q)\in G^q$ determines a unique homomorphism $\phi_{\mathbf{g}}:R_q\to G$ sending $x_i$ to $g_i$ for all $i$.
This proves the bijection. 

Let $f:G\to H$ be an arbitrary homomorphism in $\mathcal{C}_e$. The above bijections commute with the maps induced by $f$, that is, the diagram
$$
\begin{tikzcd}
    \Hom(R_q,G) \arrow[r, "\sim"] \arrow[d, "f_*"'] & G^q \arrow[d, "f^q"] \\
    \Hom(R_q,H) \arrow[r, "\sim"'] & H^q
\end{tikzcd}
$$
is commutative.
The bijections in question are therefore natural in $G$.
Applying the free abelian group functor to the natural bijection
$\Hom(R_q,G)\cong G^q$
gives
$$\mathbb Z\bigl[
    \Hom(R_q,G)
  \bigr]
  \cong
  \mathbb Z[G^q]
  =
  C_q(G).$$
The result is proved.
\end{proof}

Let $u_q=[x_1\mid\cdots\mid x_q]\in C_q(R_q)$. Let $G$ be a finite group in $\mathcal{C}_e$.
If $\mathbf g=(g_1,\ldots,g_q)\in G^q$,
then $(\phi_{\mathbf g})_{\#}(u_q)=[g_1\mid\cdots\mid g_q]$.
Given $c\in C_r(R_q)$, the rule 
  $[g_1\mid\cdots\mid g_q]
  \mapsto
  (\phi_{\mathbf g})_\#(c)$
extends linearly to a natural transformation
$C_q(-)\rightarrow C_r(-)$.

\begin{proof}[Proof of Theorem \ref{thm:uniform-homology-exponent}]
Recall that
  $m_q=|R_q|$
and define
$$
  N_0=1,
  \qquad
  N_q=m_qN_{q-1}.
$$
Thus,
$$
  N_q=\prod_{j=1}^q |R(j,e)|.
$$
We prove inductively that, for every $q\geq 1$, there exist natural
homomorphisms
$h_i^{(q)}:C_i(-)\rightarrow C_{i+1}(-)$,
$1\leq i\leq q$,
such that, with $h_0^{(q)}=0$,
\begin{equation}
    \label{eq:homot}
      \partial_{i+1}h_i^{(q)}+h_{i-1}^{(q)}\partial_i=
  N_q\id_{C_i}
\end{equation}
for every $1\leq i\leq q$.
In other words, multiplication by $N_q$ is naturally null-homotopic in positive degrees through degree $q$.

We first treat the base case $q=1$.  Here
$R_1=\langle x_1\rangle\cong C_e$ and $m_1=e$.  In the unnormalized
bar complex of $R_1$, put
$$
  c_1=\sum_{j=0}^{e-1}[x_1^j\mid x_1]\in C_2(R_1).
$$
Using
$\partial_2[a\mid b]=[b]-[ab]+[a]$,
we obtain
$$
  \partial_2c_1
  =\sum_{j=0}^{e-1}
    \bigl([x_1]-[x_1^{j+1}]+[x_1^j]\bigr)\\
  =e[x_1],$$
because $x_1^e=1$.  For $G\in\mathcal C_e$ and $g\in G$, let
$\phi_g:R_1\to G$ be the homomorphism satisfying
$\phi_g(x_1)=g$, and define
$$
  h_1^{(1)}(G)[g]=(\phi_g)_\#(c_1).
$$
This construction is natural in $G$, and
$
  \partial_2h_1^{(1)}(G)[g]
  =e[g]
  =N_1[g].
$
This proves \eqref{eq:homot} for $q=1$.

Suppose the assertion is proved through degree $q-1$. To simplify notation, write $h_i=h_i^{(q-1)}$, where $i<q$. Consider the natural endomorphism $\alpha_q:
  C_q(-)\rightarrow C_q(-)$
defined by
\begin{equation}
    \label{eq:alpha}
      \alpha_q= N_{q-1}\id_{C_q}-h_{q-1}\partial_q.
\end{equation}
We claim that $\alpha_q$ takes values in cycles. For $q=1$ this follows from $\partial_1=0$.  If $q\geq 2$, the induction
hypothesis in degree $q-1$ gives
\begin{equation}
      \partial_qh_{q-1}+h_{q-2}\partial_{q-1}=N_{q-1}\id_{C_{q-1}}.
      \label{eq:cycl}
\end{equation}
Composing \eqref{eq:cycl} on the right with $\partial_q$ and using
$\partial_{q-1}\partial_q=0$
gives
$$\partial_qh_{q-1}\partial_q=N_{q-1}\partial_q.$$
Hence,
$$\partial_q\alpha_q=N_{q-1}\partial_q-
  \partial_qh_{q-1}\partial_q
  =0,$$
which proves the assertion, that is, $\alpha_q(G)(C_q(G))\subset Z_q(G)$ for every group in $\mathcal{C}_e$.
In particular, $z_q=\alpha_q(R_q)(u_q)$
belongs to $Z_q(R_q)$. By \cite[Chapter III, Corollary 10.2]{Bro82} we have that $m_q[z_q]=0$ in $H_q(R_q,\mathbb{Z})$. Therefore, there exists $c_q\in C_{q+1}(R_q)$ such that
$$\partial_{q+1}c_q=m_qz_q=m_q\alpha_q(R_q)(u_q).$$
Define $t_q(G):C_q(G)\to C_{q+1}(G)$ via
$$t_q(G)[g_1\mid\cdots\mid g_q]=(\phi_{\mathbf{g}})_\#(c_q).$$
Because this construction comes from the fixed $c_q\in C_{q+1}(R_q)$, the maps $t_q(G)$ are natural in $G$. We have that
\begin{align*}
    \partial_{q+1}t_q(G)[g_1\mid\cdots\mid g_q] &=
    \partial_{q+1}(\phi_{\mathbf{g}})_\#(c_q)\\
    &= (\phi_{\mathbf{g}})_\#(\partial_{q+1}c_q)\\
    &= m_q\cdot (\phi_{\mathbf{g}})_\#(z_q)\\
    &= m_q\cdot (\phi_{\mathbf{g}})_\#(\alpha_q(R_q)(u_q)).
\end{align*}
As $\alpha_q$ is a natural transformation, we have that
$$(\phi_{\mathbf{g}})_\#(\alpha_q(R_q)(u_q))=\alpha_q(G)(\phi_{\mathbf{g}})_\#(u_q)=\alpha_q(G)[g_1\mid\cdots\mid g_q].$$
This, together with the calculation above, shows that
\begin{equation}
    \label{eq:claim2}
    \partial_{q+1}t_q(G)=m_q\alpha_q(G).
\end{equation}
Now we define
$$h_q^{(q)}=t_q$$
and
$$h_i^{(q)}=m_qh_i$$
for $1\le i<q$.
For $i<q$, the induction hypothesis gives
\begin{align*}
      \partial_{i+1}h_i^{(q)}+h_{i-1}^{(q)}\partial_i
  &=
  m_q
  \left(
    \partial_{i+1}h_i
    +
    h_{i-1}\partial_i
  \right)
  \\
  &=
  m_qN_{q-1}\id_{C_i}
  \\
  &=
  N_q\id_{C_i}.
\end{align*}
In degree $q$, the equation \eqref{eq:claim2} gives
\begin{align*}
  \partial_{q+1}h_q^{(q)}
  +
  h_{q-1}^{(q)}\partial_q
  &=
  \partial_{q+1}t_q
  +
  m_qh_{q-1}\partial_q
  \\
  &=
  m_q\alpha_q
  +
  m_qh_{q-1}\partial_q
  \\
  &=
  m_q
  \left(
    N_{q-1}\id_{C_q}
    -
    h_{q-1}\partial_q
    +
    h_{q-1}\partial_q
  \right)
  \\
  &=
  N_q\id_{C_q}.
\end{align*}
This completes the induction proof of \eqref{eq:homot}.

Now fix $n$ and let $1\leq i\leq n$.  If $z\in Z_i(G)$,
then $\partial_i z=0$, and \eqref{eq:homot}, with $q=n$, yields
$$
  N_n z
  =
  \partial_{i+1}h_i^{(n)}(z)
  +
  h_{i-1}^{(n)}(\partial_i z)
  =
  \partial_{i+1}h_i^{(n)}(z).$$
Thus, $N_nz$ is a boundary, and hence
$N_n[z]=0$ in $H_i(G,\mathbb Z)$.
Therefore,
$N_nH_i(G,\mathbb Z)=0$
for every $1\leq i\leq n$, as required.
\end{proof}

We note that
the choice
$m_q=|R(q,e)|$
in the above proof is merely convenient.  The only property required of
$m_q$ is $m_q H_q(R(q,e),\mathbb Z)=0$.
Consequently, one may instead take
  $m_q=
  \exp H_q(R(q,e),\mathbb Z)$,
which gives the formally sharper bound
$$
  \exp H_n(G,\mathbb Z)
  \mid
  \prod_{q=1}^n
  \exp H_q(R(q,e),\mathbb Z).
$$
The same argument applies, without change, to locally finite groups
of exponent dividing $e$.  


\section{Exponent bounds from the Lyndon--Hochschild--Serre spectral sequence}
\label{sec:LHS-exponent}

\noindent
We record a general consequence of the Lyndon--Hochschild--Serre
spectral sequence which gives useful exponent bounds for the homology
of group extensions.  Throughout this section all homology groups are
taken with integral coefficients.

Let
$$
  1\longrightarrow N\longrightarrow G\longrightarrow Q\longrightarrow 1
$$
be a group extension.  The homological Lyndon--Hochschild--Serre
spectral sequence is
  $$E^2_{p,q}=
  H_p\bigl(Q,H_q(N,\mathbb Z)\bigr)
  \Longrightarrow
  H_{p+q}(G,\mathbb Z),$$
see, for example, \cite[Chapter VII, Section 6]{Bro82}.
Here the action of $Q$ on $H_q(N,\mathbb Z)$ is the one induced by
conjugation in $G$.

The following standard filtration argument is closely related to
 \cite{Hil01}:

\begin{proposition}
\label{thm:LHS-exponent-bound}
Let
\[
  1\longrightarrow N\longrightarrow G\longrightarrow Q\longrightarrow 1
\]
be an extension of finite groups, and let $n\geq 1$.  Then
  $\exp H_n(G,\mathbb Z)$
  divides
  $$\exp H_n(Q,\mathbb Z)
  \prod_{j=1}^n
  \exp H_j(N,\mathbb Z).$$
\end{proposition}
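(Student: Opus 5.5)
The plan is to use the filtration of $H_n(G,\mathbb Z)$ coming from the Lyndon--Hochschild--Serre spectral sequence. Since the spectral sequence converges, $H_n(G,\mathbb Z)$ has a finite filtration
$$
  0=F_{-1}\subseteq F_0\subseteq F_1\subseteq\cdots\subseteq F_n=H_n(G,\mathbb Z),
  \qquad F_p/F_{p-1}\cong E^\infty_{p,n-p}.
$$
The key elementary fact is this: if an abelian group $A$ has a finite filtration whose successive quotients are annihilated by integers $a_0,\dots,a_n$, then $A$ is annihilated by $a_0a_1\cdots a_n$. This follows by induction on the filtration length. If $a_p F_p\subseteq F_{p-1}$ for all $p$, then $a_0\cdots a_n F_n\subseteq F_{-1}=0$. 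Hence $\exp H_n(G,\mathbb Z)$ divides $\prod_{p=0}^n \exp E^\infty_{p,n-p}$, and it remains to bound each $E^\infty$ term.

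Each $E^\infty_{p,q}$ is a subquotient of $E^2_{p,q}=H_p(Q,H_q(N,\mathbb Z))$, so its exponent divides $\exp E^2_{p,q}$. There are two cases.

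\emph{Case $q=0$ (the term $p=n$).} Here $H_0(N,\mathbb Z)=\mathbb Z$ with trivial action, so $E^2_{n,0}=H_n(Q,\mathbb Z)$. This contributes the factor $\exp H_n(Q,\mathbb Z)$.

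\emph{Case $q=j\ge 1$ (the terms $p=n-j$).} The coefficient module $M=H_j(N,\mathbb Z)$ is a $\mathbb ZQ$-module with $mM=0$, where $m=\exp H_j(N,\mathbb Z)$; this exponent is finite because $N$ is finite. Multiplication by $m$ on $M$ is a $\mathbb ZQ$-module endomorphism. By functoriality of $H_p(Q,-)$ it induces multiplication by $m$ on $H_p(Q,M)$. Since that endomorphism is zero, $m\,H_p(Q,M)=0$. This also covers $p=0$, where $E^2_{0,j}=M_Q$ is a quotient of $M$. So these terms contribute the factors $\exp H_j(N,\mathbb Z)$ for $j=1,\dots,n$.

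Multiplying the contributions gives exactly the stated divisor
$$
  \exp H_n(Q,\mathbb Z)\prod_{j=1}^n\exp H_j(N,\mathbb Z).
$$

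None of the steps is a real obstacle. The only point needing care is the convergence statement: that the filtration on $H_n(G,\mathbb Z)$ is finite and exhaustive with $E^\infty$ quotients, and that $E^\infty$ is a subquotient of $E^2$. Both are standard for first-quadrant spectral sequences, so I would cite \cite[Chapter VII, Section 6]{Bro82} for them. The remaining detail is additivity of $H_p(Q,-)$, which ensures that multiplication by $m$ on the coefficients induces multiplication by $m$ on homology.
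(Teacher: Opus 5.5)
Your proposal is correct and follows essentially the same argument as the paper: bound each $E^\infty_{p,n-p}$ by an annihilator of $E^2_{p,n-p}$ ($\exp H_n(Q,\mathbb Z)$ for $q=0$, $\exp H_j(N,\mathbb Z)$ for $q=j\geq 1$), then multiply these annihilators along the convergence filtration of $H_n(G,\mathbb Z)$. Your use of additivity of $H_p(Q,-)$ to see that $\exp H_q(N,\mathbb Z)$ kills $E^2_{p,q}$ makes explicit a step the paper only asserts.
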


\begin{proof}
Consider the Lyndon--Hochschild--Serre spectral sequence
$E^2_{p,q}$ as above. Denote
$a_j=\exp H_j(N,\mathbb Z)$
and
$b_n=\exp H_n(Q,\mathbb Z)$.
For $q>0$, the underlying abelian group
$H_q(N,\mathbb Z)$ is annihilated by $a_q$.  Hence,
$a_q E^2_{p,q}=0$
for
$p\geq0$ and $q\geq1$.
For $q=0$ we have
$H_0(N,\mathbb Z)\cong\mathbb Z$
with trivial $Q$-action, and therefore
$E^2_{p,0}=H_p(Q,\mathbb Z)$.
In particular,
$b_nE^2_{n,0}=0$.
Every term on every subsequent page is a subquotient of the
corresponding term on the preceding page.  Consequently, an
annihilator of $E^2_{p,q}$ also annihilates $E^\infty_{p,q}$.
It follows that
$a_qE^\infty_{n-q,q}=0$
for $1\leq q\leq n$,
and
  $b_nE^\infty_{n,0}=0$.

Convergence of the spectral sequence gives a finite filtration
$$
  0=F_{-1}
  \subseteq F_0
  \subseteq\cdots
  \subseteq F_n
  =
  H_n(G,\mathbb Z)
$$
whose associated graded groups satisfy
$$
  F_p/F_{p-1}
  \cong
  E^\infty_{p,n-p}
  \qquad
  (0\leq p\leq n).
$$
The quotient
$F_p/F_{p-1}$
is annihilated by $a_{n-p}$ whenever $0\leq p<n$, while 
the last quotient
$F_n/F_{n-1}$
is annihilated by $b_n$. We immediately deduce that
  $b_n a_1a_2\cdots a_n\,
  H_n(G,\mathbb Z)=0$.
\end{proof}

Proposition \ref{thm:LHS-exponent-bound} therefore becomes particularly
simple when the kernel is abelian.

\begin{corollary}
\label{cor:abelian-kernel-homology}
Let
\[
  1\longrightarrow A\longrightarrow G\longrightarrow Q\longrightarrow 1
\]
be an extension of finite groups with $A$ abelian.
Then, for every $n\geq1$,
$\exp H_n(G,\mathbb Z)$
divides
$(\exp A)^n\exp H_n(Q,\mathbb Z)$.
\end{corollary}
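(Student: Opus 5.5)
We apply Proposition~\ref{thm:LHS-exponent-bound} with $N=A$. It then suffices to show that $\exp H_j(A,\mathbb Z)$ divides $\exp A$ for every $j\geq 1$. Granting this, the product $\prod_{j=1}^n \exp H_j(A,\mathbb Z)$ divides $(\exp A)^n$, and the stated divisibility follows at once.

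To bound the homology of $A$, write $e=\exp A$ and decompose $A\cong \mathbb Z/d_1\oplus\cdots\oplus\mathbb Z/d_k$ with every $d_i\mid e$. The integral homology of a finite cyclic group $\mathbb Z/d$ is $\mathbb Z/d$ in odd positive degrees and $0$ in even positive degrees. So every positive-degree homology group of each cyclic factor is annihilated by $d_i$, and hence by $e$. Its degree-zero homology is $\mathbb Z$.

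We now pass to the direct sum by the Künneth formula. For a product $B\times C$ of finite abelian groups, $H_j(B\times C)$ is a sum of terms $H_p(B)\otimes H_q(C)$ with $p+q=j$, together with terms $\operatorname{Tor}(H_p(B),H_q(C))$ with $p+q=j-1$. This splitting is non-natural, but that does not matter for exponents. When $j\geq 1$, each tensor summand has $p\geq1$ or $q\geq1$, so one of its factors is annihilated by $e$, and therefore so is the tensor product. Each $\operatorname{Tor}$ term involves two finite groups annihilated by $e$, or has a free factor $H_0=\mathbb Z$ and so vanishes; in either case it is killed by $e$. Induction on the number of cyclic factors then gives $e\,H_j(A,\mathbb Z)=0$ for all $j\ge1$.

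The only step needing care is this Künneth bound. As an alternative that avoids it, one can use the Pontryagin product: the maps $H_\ast(\mathbb Z/d_i)$ generate $H_\ast(A)$ modulo $\operatorname{Tor}$-type classes. The Künneth argument is cleaner, however, and I would present that one.
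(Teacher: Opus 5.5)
Your proposal is correct and is the argument the paper intends: the corollary is stated without proof as an immediate specialization of Proposition~\ref{thm:LHS-exponent-bound} with $N=A$, using that $\exp H_j(A,\mathbb Z)$ divides $\exp A$ for $j\geq 1$. The paper takes that last fact for granted, and your K\"unneth induction over the cyclic factors is a valid way to supply it: tensor terms are killed because one degree is positive, and $\mathrm{Tor}$ terms are either killed by $\exp A$ or vanish because $H_0=\mathbb Z$ is free.
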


As a first structural application, one obtains an explicit bound for
metabelian groups.

\begin{corollary}
\label{cor:metabelian-homology}
Let $G$ be a finite metabelian group of exponent dividing $e$.  Then
for every $n\geq1$, the exponent of 
$H_n(G,\mathbb Z)$
divides $e^{n+1}$.
\end{corollary}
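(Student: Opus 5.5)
We apply Corollary \ref{cor:abelian-kernel-homology} to the extension
\[
  1\longrightarrow G'\longrightarrow G\longrightarrow G/G'\longrightarrow 1 .
\]
Since $G$ is metabelian, the derived subgroup $A=G'$ is abelian. It is a subgroup of $G$, so $\exp A$ divides $e$, and hence $(\exp A)^n$ divides $e^n$. The corollary then shows that $\exp H_n(G,\mathbb Z)$ divides $e^n\exp H_n(G/G',\mathbb Z)$. It therefore remains to prove that $\exp H_n(Q,\mathbb Z)$ divides $e$ when $Q=G/G'$, a finite abelian group of exponent dividing $e$.

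For the abelian quotient, the approach is to show that a finite abelian group $Q$ of exponent dividing $e$ has $e\,H_n(Q,\mathbb Z)=0$ for all $n\ge1$. First, write $Q\cong C_{d_1}\times\cdots\times C_{d_k}$ with each $d_i\mid e$. Next, use the integral Künneth formula. The homology of a cyclic group $C_d$ in positive degrees is $\mathbb Z/d$ or $0$. The Künneth formula for $C_a\times B$ gives a split short exact sequence whose terms are tensor products $H_i(C_a)\otimes H_j(B)$ and Tor terms $\mathrm{Tor}(H_i(C_a),H_{j}(B))$.

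Each term in positive total degree is handled as follows. If one factor is $H_0=\mathbb Z$, the term is the homology of the other group in positive degree, which by induction on $k$ is killed by $e$. Otherwise it is a tensor product or Tor term of groups killed by $e$, and these are also killed by $e$. Since the sequence splits, every group in it is killed by $e$, so $H_n(Q,\mathbb Z)$ is a direct sum of groups killed by $e$. By induction on the number $k$ of cyclic factors, $e\,H_n(Q,\mathbb Z)=0$.

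Combining the two steps gives that $\exp H_n(G,\mathbb Z)$ divides $e^n\cdot e=e^{n+1}$. The main point needing care is the abelian step, where the naive bound from iterating Corollary \ref{cor:abelian-kernel-homology} along cyclic factors would give a much larger power of $e$. The splitting of the Künneth sequence is what yields the sharp exponent $e$ there. Alternatively, one could cite the known computation of $H_*$ of finite abelian groups as a direct sum of cyclic groups of orders dividing the $d_i$.
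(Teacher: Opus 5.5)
Your proof is correct and follows the paper's route: apply Corollary~\ref{cor:abelian-kernel-homology} to $1\to G'\to G\to G/G'\to 1$. The paper takes for granted that $e\,H_n(G/G',\mathbb Z)=0$ for the abelian quotient, whereas you justify it via the split Künneth sequence, rightly noting that the splitting is what keeps the bound at $e$ rather than $e^2$.
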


\begin{proof}
    Apply Corollary~\ref{cor:abelian-kernel-homology} to
$$
  1\longrightarrow G'\longrightarrow G
  \longrightarrow G/G'\longrightarrow1.
$$
Both $G'$ and $G/G'$ are abelian and have exponent dividing $e$, hence the result follows.
\end{proof}

The same argument can be iterated along the derived series.

\begin{corollary}
\label{cor:solvable-homology-exponent}
Let $G$ be a finite solvable group of exponent dividing $e$ and
derived length at most $d$.  Then, for every $n\geq1$,
$\exp H_n(G,\mathbb Z)$ divides
$e^{\,1+n(d-1)}$.
\end{corollary}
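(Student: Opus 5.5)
We argue by induction on $d$, applying Corollary~\ref{cor:abelian-kernel-homology} to the extension
$$
  1\longrightarrow G^{(d-1)}\longrightarrow G\longrightarrow G/G^{(d-1)}\longrightarrow 1,
$$
where $G^{(d-1)}$ is the last possibly nontrivial term of the derived series. If $G$ has derived length at most $d$, then $G^{(d)}=1$. Hence $G^{(d-1)}$ is abelian, and its exponent divides $e$. The quotient $G/G^{(d-1)}$ is solvable of derived length at most $d-1$ and has exponent dividing $e$.

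For the base case $d=1$, the group $G$ is abelian of exponent dividing $e$. Here $H_n(G,\mathbb Z)$ is annihilated by $e$, so the claimed bound $e^{1+n\cdot 0}=e$ holds. To justify this, write $G=\bigoplus C_{k_j}$ with $k_j\mid e$. By the Künneth formula, $H_n(G,\mathbb Z)$ is a finite direct sum of tensor products and Tor terms of homology groups of the cyclic factors. In positive degrees each such group is killed by the relevant $k_j$, hence by $e$. Alternatively, one can cite the standard fact that for a finite abelian group $A$ one has $\exp(A)\cdot H_n(A,\mathbb Z)=0$ for $n\ge1$. The case $d=2$ is Corollary~\ref{cor:metabelian-homology}, and it agrees with the formula, since $e^{1+n}$ divides $e^{n+1}$.

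For the inductive step, assume that $\exp H_n(Q,\mathbb Z)$ divides $e^{1+n(d-2)}$ for every finite solvable $Q$ of exponent dividing $e$ and derived length at most $d-1$, and for every $n\ge 1$. Corollary~\ref{cor:abelian-kernel-homology} then shows that $\exp H_n(G,\mathbb Z)$ divides
$$
  e^n\cdot e^{1+n(d-2)}=e^{1+n(d-1)},
$$
which is the claim.

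The only step that needs care is the abelian base case, namely the claim that $e$ alone (rather than $e^n$ or $|A|$) kills $H_n(A,\mathbb Z)$. I would verify it via Künneth, checking that the Tor terms are likewise killed by a divisor of $e$. If one prefers to avoid this, the bound can be started from $d=2$ with Corollary~\ref{cor:metabelian-homology}. For $d=1$ one could also view $A$ as an extension $1\to A\to A\to 1\to 1$ in Corollary~\ref{cor:abelian-kernel-homology}, but that gives only $e^n$, so the direct Künneth argument is the better choice.
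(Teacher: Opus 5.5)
Your proof is correct and follows the paper's argument: induction on $d$ via the extension $1\to G^{(d-1)}\to G\to G/G^{(d-1)}\to 1$ and Corollary~\ref{cor:abelian-kernel-homology}. You also correctly justify the abelian base case, which the paper only calls straightforward, using the split Künneth formula. One small remark: starting at $d=2$ with Corollary~\ref{cor:metabelian-homology} would not actually avoid the abelian case, since that corollary's proof itself relies on $e\cdot H_n(G/G',\mathbb Z)=0$.
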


\begin{proof}
    We argue by induction on $d$.  The case $d=1$ is straightforward.  If $d>1$, then
$A=G^{(d-1)}$ is abelian and $G/A$ has derived length at most $d-1$.
Corollary~\ref{cor:abelian-kernel-homology} and the induction hypothesis
give that
  $\exp H_n(G,\mathbb Z)$
  divides
$e^n\exp H_n(G/A,\mathbb Z)$, which divides
  $e^n e^{1+n(d-2)}=e^{1+n(d-1)}$.
\end{proof}

An analogous argument gives a bound in terms of nilpotency class.

\begin{corollary}[See \cite{Hil01}, Lemma A.11]
\label{cor:nilpotent-homology-exponent}
Let $G$ be a finite nilpotent group of exponent dividing $e$ and
nilpotency class at most $c$.  Then, for every $n\geq1$,
$\exp H_n(G,\mathbb Z)$
divides
$e^{\,1+n(c-1)}$.
\end{corollary}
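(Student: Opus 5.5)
We argue by induction on the class $c$, mirroring the proof of Corollary~\ref{cor:solvable-homology-exponent} but using the lower central series in place of the derived series. For $c=1$ the group $G$ is abelian of exponent dividing $e$. We take the trivial extension $1\to G\to G\to 1\to 1$ and apply Proposition~\ref{thm:LHS-exponent-bound} (or Corollary~\ref{cor:abelian-kernel-homology}). This gives the bound $(\exp G)^n\cdot 1$, which is not sharp enough, since we want $e^{1}$. So in the base case we instead use the classical fact that for a finite abelian group $A$ of exponent dividing $e$, every $H_n(A,\mathbb Z)$ has exponent dividing $e$. This follows from the K\"unneth formula: write $A$ as a product of cyclic groups of orders dividing $e$. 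The homology of a cyclic group $C_m$ in positive degrees is $\mathbb Z/m$ or $0$, and tensor and Tor terms of groups annihilated by divisors of $e$ are again annihilated by $e$. Alternatively, one can cite the case $d=1$ of Corollary~\ref{cor:solvable-homology-exponent}, which the paper calls straightforward and which asserts exactly this.

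For the inductive step, let $c\ge 2$ and put $A=\gamma_c(G)$, the last nontrivial term of the lower central series. Then $A$ is central, in particular abelian, of exponent dividing $e$. The quotient $G/A$ is nilpotent of class at most $c-1$, and its exponent still divides $e$. Applying Corollary~\ref{cor:abelian-kernel-homology} to $1\to A\to G\to G/A\to 1$ gives that $\exp H_n(G,\mathbb Z)$ divides $e^n\exp H_n(G/A,\mathbb Z)$. By the induction hypothesis the latter divides $e^n\cdot e^{1+n(c-2)}=e^{1+n(c-1)}$, as required.

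The only step needing care is the base case, because Corollary~\ref{cor:abelian-kernel-homology} applied naively loses a factor of $e^{n-1}$. The K\"unneth computation for finite abelian groups settles this, so I expect no real obstacle. Alternatively, one can observe directly that the statement is formally Corollary~\ref{cor:solvable-homology-exponent} with $d$ replaced by $c$, since a group of class at most $c$ has derived length at most $c$ (indeed $G^{(i)}\le\gamma_{i+1}(G)$). This gives a one-line proof: the nilpotent bound follows immediately from the solvable one.
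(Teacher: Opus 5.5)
Your proof is correct and follows the paper's argument: induction on $c$, taking $A=\gamma_c(G)$, applying Corollary~\ref{cor:abelian-kernel-homology}, then the induction hypothesis, with the abelian case as the base. Your K\"unneth justification that $e\,H_n(A,\mathbb Z)=0$ for finite abelian $A$ of exponent dividing $e$ spells out what the paper leaves as ``the abelian case,'' and your alternative one-line reduction to Corollary~\ref{cor:solvable-homology-exponent} via $G^{(i)}\le\gamma_{i+1}(G)$ is also valid.
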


\begin{proof}
We argue by induction on $c$.  The case $c=1$ is the abelian case.
If $c>1$, then $A=\gamma_c(G)$ is abelian and $G/A$ has nilpotency
class at most $c-1$.  Applying
Corollary~\ref{cor:abelian-kernel-homology} and then the induction
hypothesis gives
$\exp H_n(G,\mathbb Z)
  \mid e^n e^{1+n(c-2)}
  =e^{1+n(c-1)}$.
\end{proof}


\end{document}